\title[A characterization of projective 
spaces]{A characterization of projective 
spaces from the Mori theoretic 
viewpoint}
\author{Osamu Fujino and Keisuke Miyamoto}
\date{2020/6/17, version 0.10}
\subjclass[2010]{Primary 14E30; Secondary 14F17}
\keywords{quasi-log canonical pairs, projective spaces, 
vanishing theorems}
\address{Osamu Fujino \\ Department of 
Mathematics, Graduate School of Science, 
Osaka University, Toyonaka, Osaka 560-0043, Japan}
\email{fujino@math.sci.osaka-u.ac.jp}
\address{Keisuke Miyamoto \\ Department 
of Mathematics, Graduate School of Science, 
Osaka University, Toyonaka, Osaka 560-0043, Japan}
\email{u901548b@ecs.osaka-u.ac.jp}
\DeclareMathOperator{\Nqlc}{Nqlc}
\DeclareMathOperator{\Ker}{Ker}
\DeclareMathOperator{\Supp}{Supp}
\newtheorem{thm}{Theorem}[section]
\newtheorem{lem}[thm]{Lemma}
\newtheorem{cor}[thm]{Corollary}
\theoremstyle{definition}
\newtheorem{step}{Step}
\newtheorem{case}{Case}
\newtheorem{ex}[thm]{Example}
\newtheorem{defn}[thm]{Definition}
\newtheorem{rem}[thm]{Remark}
\newtheorem*{ack}{Acknowledgments}  
\begin{document}

\maketitle 

\begin{abstract}
We give a characterization of projective spaces 
for quasi-log canonical pairs from 
the Mori theoretic viewpoint. 
\end{abstract}

\tableofcontents

\section{Introduction}\label{f-sec1} 

In this paper, we give a characterization of projective spaces 
for quasi-log canonical pairs 
from the Mori theoretic viewpoint. 
We are mainly interested in singular varieties which 
naturally appear in the minimal model theory of 
higher-dimensional complex projective varieties. 
Although we could not find it explicitly in the literature, 
the following theorem is more or less well known to the 
experts. 

\begin{thm}\label{f-thm1.1}
Let $(X, \Delta)$ be a projective kawamata log terminal 
pair such that 
$-(K_X+\Delta)$ is ample. 
Assume that $-(K_X+\Delta)\equiv rH$ for some Cartier divisor 
$H$ on $X$ with $r>n=\dim X$. 
Then $X$ is isomorphic to $\mathbb P^n$ with $\mathcal O_X(H)\simeq 
\mathcal O_{\mathbb P^n}(1)$. 
\end{thm}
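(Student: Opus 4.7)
The plan is to compute the Hilbert polynomial of $H$ via Kawamata--Viehweg vanishing and then invoke Fujita's classification of polarized varieties of $\Delta$-genus zero. Since $-(K_X+\Delta)$ is ample and numerically equivalent to $rH$ with $r>0$, the Cartier divisor $H$ is itself ample. For any integer $m$ with $m+r>0$, the Cartier divisor $mH-(K_X+\Delta)$ is numerically equivalent to the ample divisor $(m+r)H$, so Kawamata--Viehweg vanishing applied to the klt pair $(X,\Delta)$ yields
\[
h^{i}(X,\mathcal O_X(mH))=0 \qquad\text{for every } i>0 \text{ and every integer } m>-r,
\]
which, because $r>n$, holds in particular for all $m\geq -n$; specializing to $m=0$ gives $\chi(X,\mathcal O_X)=h^0(X,\mathcal O_X)=1$.

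Next I would read off the Hilbert polynomial of $H$. Since $-iH$ is anti-ample for $i\geq 1$, we have $h^0(X,\mathcal O_X(-iH))=0$ for $1\leq i\leq n$. Combined with the vanishing above, the polynomial $P(m):=\chi(X,\mathcal O_X(mH))$, of degree $n$ in $m$, is determined by the $n+1$ conditions $P(0)=1$ and $P(-1)=P(-2)=\cdots=P(-n)=0$, which force $P(m)=\binom{m+n}{n}$. Reading off the leading coefficient and the value at $m=1$ gives $H^n=1$ and $h^0(X,\mathcal O_X(H))=n+1$, so the Fujita $\Delta$-genus $n+H^n-h^0(X,\mathcal O_X(H))$ equals zero. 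Combined with $H^n=1$, Fujita's classification of polarized varieties of $\Delta$-genus zero forces $(X,\mathcal O_X(H))\simeq(\mathbb P^n,\mathcal O_{\mathbb P^n}(1))$.

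The main obstacle is the invocation of Fujita's classification in the possibly singular setting; since $X$ is a normal projective variety with klt singularities, Fujita's arguments adapt, but verifying this is the delicate part. A more Mori-theoretic alternative, consistent with the spirit of the paper, would be to apply the cone theorem for klt pairs to produce an extremal rational curve $C$ with $H\cdot C=1$, conclude $\rho(X)=1$ via an Ionescu--Wi\'sniewski-type length inequality, and then characterize $\mathbb P^n$ among klt Fanos of Picard number one directly.
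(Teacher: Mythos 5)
Your proof is correct and follows essentially the same route as the paper: the authors also compute the Hilbert polynomial $\chi(X,\mathcal O_X(tH))=\frac{1}{n!}(t+1)\cdots(t+n)$ via the vanishing theorem, deduce $H^n=1$ and $h^0(X,\mathcal O_X(H))=n+1$, hence $\Delta(X,H)=0$, and conclude by citing Fujita's $\Delta$-genus classification (or Kobayashi--Ochiai), exactly as you do (see Case 2 in their sketch of Theorem 2.1 and Step 1 of Lemma 4.1). Your worry about Fujita's classification in the singular setting is not an issue; the paper applies it without further comment even for quasi-log canonical pairs, as the $\Delta$-genus theory works for arbitrary polarized (normal, irreducible) projective varieties.
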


In his lectures on Fano manifolds in Osaka, 
Kento Fujita explained the above theorem and 
asked if it could be generalized. The following theorem is 
an answer to Fujita's question. 

\begin{thm}\label{f-thm1.2}
Let $[X, \omega]$ be a projective quasi-log canonical 
pair such that $X$ is connected. 
Assume that $\omega$ is not nef and 
that $\omega\equiv rD$ for some Cartier divisor 
$D$ on $X$ with $r>n=\dim X$. 
Then $X$ is isomorphic to $\mathbb P^n$ 
with $\mathcal O_X(D)\simeq 
\mathcal O_{\mathbb P^n}(-1)$. 
Moreover, there are no qlc centers of $[X, \omega]$. 
\end{thm}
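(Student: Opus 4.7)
\emph{Proof strategy.} We adapt the Mori-theoretic proof of Theorem~\ref{f-thm1.1} to the quasi-log canonical setting, using Fujino's cone, contraction, length, base-point-freeness and Kawamata-Viehweg-type vanishing theorems for qlc pairs. Since $\omega$ is not nef, the cone theorem for qlc pairs produces a $\omega$-negative extremal ray $R\subset\overline{NE}(X)$, with contraction $\varphi_R\colon X\to Y$. For any curve $C$ with $[C]\in R$, integrality of $D\cdot C<0$ forces $D\cdot C\le -1$, whence $-\omega\cdot C\ge r>n$ and $\ell(R)>n$. If $\dim Y\ge 1$, a Wi\'sniewski-type dimension inequality for qlc extremal contractions would give a general fiber $F$ of dimension $\ge\ell(R)-1>n-1$, i.e.\ $\dim F\ge n$, contradicting $\dim F=n-\dim Y<n$. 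Therefore $Y$ is a point, $\rho(X)=1$, $-\omega$ is ample, and $H:=-D$ is an ample Cartier divisor on $X$.

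\emph{Identification with $\mathbb P^n$.} With $\omega\equiv -rH$, $r>n$, $H$ ample Cartier and $\rho(X)=1$, we are in a Kobayashi--Ochiai-type setting for qlc pairs. Fujino's Kawamata-Viehweg vanishing yields $H^i(X,mH)=0$ for all $i>0$ and $m>-r$, so in particular $h^0(X,mH)=\chi(X,mH)$ for $m\ge 0$. Asymptotic Riemann-Roch, the base-point-free theorem for $[X,\omega]$, and a degree computation on the resulting morphism---exactly as in the classical Kobayashi-Ochiai proof---then give $X\simeq\mathbb P^n$ with $\mathcal O_X(H)\simeq\mathcal O_{\mathbb P^n}(1)$, i.e.\ $\mathcal O_X(D)\simeq\mathcal O_{\mathbb P^n}(-1)$.

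\emph{No qlc centers and main obstacle.} Suppose for contradiction that $[X,\omega]$ possessed a proper qlc center, and let $W_0\subsetneq X$ be a minimal one. Fujino's subadjunction for qlc pairs makes $W_0$ normal and $[W_0,\omega|_{W_0}]$ a projective klt Fano pair with $-\omega|_{W_0}\equiv rH|_{W_0}$ ample Cartier on $W_0$. The classical Kobayashi-Ochiai upper bound on the Fano index of a klt Fano variety then gives $r\le\dim W_0+1\le n$, contradicting $r>n$; hence $[X,\omega]$ has no qlc centers. The crux of the entire argument is the first step, where a qlc version of the Wi\'sniewski inequality is needed to force $Y$ to be a point once $\ell(R)>n$; the remaining steps are systematic transfers of the klt proof to the qlc setting via Fujino's machinery.
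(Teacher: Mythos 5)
There is a genuine gap, and you have correctly identified where it is: the step that forces the contraction target $W$ to be a point. You invoke a ``Wi\'sniewski-type dimension inequality'' $\dim F\geq \ell(R)-1$ for fibers of $\omega$-negative extremal contractions of quasi-log canonical pairs, but no such inequality is available in this generality. The Wi\'sniewski inequality rests on deformation theory of rational curves on the (smooth or mildly singular, normal) total space, and does not transfer to qlc pairs, which may be non-normal and even reducible; even the sharp length bound $\ell(R)\leq \dim X+1$ is not known here (the cone theorem for qlc pairs only gives $\ell(R)\leq 2\dim X$). The paper explicitly flags that it avoids length-of-extremal-ray estimates for exactly this reason, and replaces them by a vanishing-theorem argument: if $\dim W\geq 1$, Lemma \ref{f-lem4.3} perturbs $\omega$ by $\varphi^*B$ to manufacture a qlc center $C$ with $\dim C\geq 1$ contained in a fiber, keeping the pair qlc outside finitely many points; then the vanishing theorem (Lemma \ref{f-lem4.2}) gives $H^i(C,\mathcal O_C(tH))=0$ for $i>0$, $t\geq -n$, while ampleness of $H|_C$ kills $H^0$ for $t<0$, so the degree-$(\dim C)$ polynomial $\chi(C,\mathcal O_C(tH))$ has at least $n\geq \dim C+1$ roots and must vanish identically --- a contradiction. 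Without a substitute of this kind, your first step does not go through, and everything downstream ($\rho(X)=1$, ampleness of $-\omega$) is unsupported.

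Two further points. First, you tacitly assume $X$ is irreducible (and effectively normal): a connected qlc pair can be reducible, and the paper needs a separate reduction (its Case \ref{f-5-step2}) using adjunction to an irreducible component and the fact that a proper component would carry qlc centers, which Lemma \ref{f-lem4.1} excludes. Second, for the absence of qlc centers your route through subadjunction to a klt Fano structure on a minimal qlc center is heavier than necessary and not obviously available for qlc pairs (the moduli part is only nef, not a boundary); the paper instead proves $r\leq \dim+1$ for any irreducible qlc pair with $-\omega\equiv rH$ ample via the Hilbert polynomial, and then applies adjunction to any qlc center to get $r\leq \dim C+1\leq n$, a contradiction, handling zero-dimensional centers separately with a surjectivity-of-evaluation argument. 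Your identification of $X$ with $\mathbb P^n$ via vanishing plus a Kobayashi--Ochiai-type computation is essentially the paper's $\Delta$-genus argument and is fine once ampleness of $H$ is secured.
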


By combining Theorem \ref{f-thm1.2} with \cite[Theorem 1.1]{fujino-slc}, 
we obtain the following corollary. 

\begin{cor}\label{f-cor1.3}
Let $(X, \Delta)$ be a projective semi-log canonical 
pair such that $X$ is connected. 
Assume that $K_X+\Delta$ is not nef and 
that $K_X+\Delta\equiv rD$ for some Cartier divisor 
$D$ on $X$ with $r>n=\dim X$. 
Then $X$ is isomorphic to $\mathbb P^n$ 
with $\mathcal O_X(D)\simeq 
\mathcal O_{\mathbb P^n}(-1)$ and 
$(X, \Delta)$ is kawamata log terminal. 
\end{cor}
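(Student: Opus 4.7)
The plan is to reduce directly to Theorem \ref{f-thm1.2} via the quasi-log canonical structure canonically attached to a semi-log canonical pair. By \cite[Theorem 1.1]{fujino-slc}, the projective semi-log canonical pair $(X,\Delta)$ induces a natural quasi-log canonical structure on $X$ with $\omega = K_X+\Delta$. The hypotheses needed to apply Theorem \ref{f-thm1.2} are then immediate from those of the corollary: $X$ is connected, $\omega$ is not nef, and $\omega \equiv rD$ for the Cartier divisor $D$ with $r>n=\dim X$.

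Applying Theorem \ref{f-thm1.2} at once yields $X \cong \mathbb{P}^n$ together with $\mathcal{O}_X(D)\simeq \mathcal{O}_{\mathbb{P}^n}(-1)$, and furthermore that $[X,\omega]$ has no qlc centers. Since $X\cong \mathbb{P}^n$ is smooth and irreducible, the semi-log canonical condition collapses to the log canonical condition for the pair $(X,\Delta)$. Under the construction of \cite[Theorem 1.1]{fujino-slc}, the qlc centers of $[X,K_X+\Delta]$ correspond precisely to the non-klt centers of $(X,\Delta)$; their absence therefore forces $\lfloor \Delta \rfloor = 0$, so $(X,\Delta)$ is kawamata log terminal.

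The main point to verify is the dictionary used in the last step between qlc centers of the induced quasi-log structure and non-klt centers of the original semi-log canonical pair. This correspondence is built into the construction of \cite[Theorem 1.1]{fujino-slc}, so no further work should be required beyond citing that result.
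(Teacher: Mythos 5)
Your proposal follows the paper's proof exactly: endow $X$ with the natural quasi-log canonical structure coming from \cite{fujino-slc}, apply Theorem \ref{f-thm1.2} to get $X\simeq \mathbb P^n$, $\mathcal O_X(D)\simeq \mathcal O_{\mathbb P^n}(-1)$, and the absence of qlc centers, and then deduce that $(X,\Delta)$ is kawamata log terminal. One small caution on your last step: the inference ``no qlc centers forces $\lfloor\Delta\rfloor=0$, so $(X,\Delta)$ is klt'' is not quite right as written, because $\lfloor\Delta\rfloor=0$ alone does not imply klt for a log canonical pair (log canonical centers can occur in higher codimension even when $\lfloor\Delta\rfloor=0$); the correct and more direct route, which is also what the paper uses via \cite[Theorem 1.2 (5)]{fujino-slc}, is that the qlc centers of the induced structure are exactly the log canonical centers of $(X,\Delta)$, so their absence is by definition equivalent to $(X,\Delta)$ being kawamata log terminal.
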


Just after we put this paper on arXiv, St\'ephane Druel and 
Yoshinori Gongyo pointed out that Theorem \ref{f-thm1.1} 
was already generalized as follows: 

\begin{thm}[{\cite[Theorem 1.1]{araujo-druel}}]\label{f-thm1.4} 
Let $X$ be a normal projective variety and let $\Delta$ be an 
effective $\mathbb R$-divisor on $X$ such that $K_X+\Delta$ 
is $\mathbb R$-Cartier. Assume that $-(K_X+\Delta)\equiv rH$ 
for some ample 
Cartier divisor $H$ on $X$ with $r>n=\dim X$. 
Then $n<r\leq n+1$, $(X, \mathcal O_X(H))\simeq 
(\mathbb P^n, \mathcal O_{\mathbb P^n}(1))$, and 
$\deg \Delta=n+1-r$. 
In particular, $(X, \Delta)$ has only kawamata log 
terminal singularities. 
\end{thm}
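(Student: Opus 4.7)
\emph{Step 1 (uniruledness and rational curves).} The strategy is to extract a minimal covering family of rational curves of small $H$-degree, use bend and break to force $H\cdot C=1$, and then invoke a Cho--Miyaoka--Shepherd-Barron type criterion to identify $X$ with $\mathbb P^n$. Since $H$ is ample and $\Delta\geq 0$, $-(K_X+\Delta)\equiv rH$ is ample, so on a resolution $\pi:\tilde X\to X$ the divisor $-K_{\tilde X}$ is big modulo $\pi$-exceptional divisors. By Miyaoka--Mori, $X$ is uniruled. Choose an irreducible component $\mathcal V$ of the space of rational curves on $X$ that dominates $X$ and minimizes $d:=H\cdot C$; since $H$ is ample Cartier, $d$ is a positive integer. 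A general $C\in\mathcal V$ meets the smooth locus $X_{\mathrm{sm}}$ in a dense open, and we may assume $C\not\subset \Supp\Delta$.

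\emph{Step 2 (bend and break; bounding $r$ and $d$).} Mori's bend and break, applied to the minimal covering family $\mathcal V$, yields $-K_X\cdot C\leq n+1$, with intersection computed on $X_{\mathrm{sm}}$ (where $K_X$ is a line bundle, so $\Delta$ is $\mathbb R$-Cartier since $K_X+\Delta$ is). Combining with $-(K_X+\Delta)\cdot C=rd$ and $\Delta\cdot C\geq 0$,
\[
rd\;=\;-K_X\cdot C-\Delta\cdot C\;\leq\;n+1.
\]
Since $r>n$ and $d\geq 1$, this forces $d=1$ and $n<r\leq n+1$, so $\mathcal V$ is a covering family of rational curves of $H$-degree one.

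\emph{Step 3 (identification with $\mathbb P^n$).} With $d=1$ and $-K_X\cdot C>n$, the subfamily $\mathcal V_x\subset\mathcal V$ of curves through a general point $x\in X$ has dimension at least $-K_X\cdot C-2\geq n-1$, which is the largest possible value. A singular-variety extension of the Cho--Miyaoka--Shepherd-Barron criterion (following Kebekus and his collaborators for normal varieties) then yields $(X,\mathcal O_X(H))\simeq(\mathbb P^n,\mathcal O_{\mathbb P^n}(1))$, with members of $\mathcal V$ corresponding to projective lines. Intersecting $-(K_{\mathbb P^n}+\Delta)\equiv rH$ with a line gives $\deg\Delta=n+1-r$; effectivity of $\Delta$ reconfirms $r\leq n+1$, and $\deg\Delta<1$ makes $(\mathbb P^n,\Delta)$ kawamata log terminal.

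\emph{Main obstacle.} The crux is Step 3: globally identifying $X$ with $\mathbb P^n$ from a covering family of $H$-degree-one rational curves on a possibly badly singular normal variety, by showing that the tangent map at a general smooth point is an isomorphism and that the resulting rational map $X\dashrightarrow\mathbb P^n$ is biregular. A secondary technical point is setting up $-K_X\cdot C$ and $\Delta\cdot C$ when these divisors need not be $\mathbb R$-Cartier on all of $X$; this is handled by choosing $C$ generic enough to lie in $X_{\mathrm{sm}}$, where the formulas are meaningful.
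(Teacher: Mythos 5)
You should first be aware that the paper contains no proof of this statement: Theorem \ref{f-thm1.4} is quoted verbatim from \cite[Theorem 1.1]{araujo-druel} and is included only to situate the authors' own results. Moreover, the method the paper uses for its genuinely proved statements (vanishing theorems for quasi-log schemes, Riemann--Roch, and Fujita's $\Delta$-genus, as in Lemma \ref{f-lem4.1}) cannot reach Theorem \ref{f-thm1.4}, because here there is no hypothesis whatsoever on the singularities of $(X,\Delta)$ and hence no Kodaira-type vanishing to feed into the Euler characteristic computation. So your decision to argue through covering families of rational curves, bend and break, and a Cho--Miyaoka--Shepherd-Barron-type criterion is not merely a different route but essentially the only available one, and it is in outline the strategy actually followed in \cite{araujo-druel}.

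That said, as a proof your proposal has a genuine gap, and it sits exactly where you locate the ``main obstacle.'' Everything after Step 1 rests on the assertion that a general member $C$ of the minimal covering family can be taken to lie in $X_{\mathrm{sm}}$. This is not a general fact about normal varieties: on the cone over an elliptic normal curve, \emph{every} rational curve passes through the vertex, so the members of the (unique) covering family all meet the singular locus; the standard ``free curves avoid codimension-two subsets'' argument requires smoothness. Without $C\subset X_{\mathrm{sm}}$, the quantities $-K_X\cdot C$ and $\Delta\cdot C$ in your display $rd=-K_X\cdot C-\Delta\cdot C\leq n+1$ are simply undefined ($K_X$ and $\Delta$ are only Weil divisors, and only their sum is $\mathbb R$-Cartier), the inequality $\Delta\cdot C\geq 0$ has no meaning, and the bend-and-break bound $-K_X\cdot C\leq n+1$ is being invoked outside the setting in which it is proved. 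The same applies to Step 3: the Cho--Miyaoka--Shepherd-Barron criterion is a theorem about smooth varieties, and the ``singular-variety extension following Kebekus and his collaborators'' that you cite without specifics is precisely where the conclusion $(X,\mathcal O_X(H))\simeq(\mathbb P^n,\mathcal O_{\mathbb P^n}(1))$ would actually be established. Handling these points --- making the intersection numbers meaningful (e.g., by working with $\pi^*(K_X+\Delta)$ on a resolution and with general complete intersection curves, which do avoid the codimension-two singular locus), running bend and break there, and then descending the identification with $\mathbb P^n$ --- is the real content of Araujo--Druel's argument. As written, your Steps 2 and 3 are accurate statements of what must be proved rather than proofs.
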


We note that there are no assumptions on singularities of 
$(X, \Delta)$ in Theorem \ref{f-thm1.4}. On the other hand, 
in Theorem \ref{f-thm1.2} and Corollary \ref{f-cor1.3}, 
we relax the assumption that $-(K_X+\Delta)$ is ample in 
Theorem \ref{f-thm1.1}, 
although we still require some assumptions on singularities of pairs. 

\medskip 

We summarize the contents of this paper. 
In Section \ref{f-sec2}, we give a sketch of proof of Theorem 
\ref{f-thm1.2} for log canonical pairs in order to make 
our main result more accessible. 
In Section \ref{f-sec3}, we collect some basic definitions of 
the minimal model theory of higher-dimensional 
algebraic varieties and the theory of quasi-log schemes. 
In Section \ref{f-sec4}, we prepare three important lemmas 
on quasi-log schemes 
for the proof of Theorem \ref{f-thm1.2}. 
Section \ref{f-sec5} is devoted to the proof of 
Theorem \ref{f-thm1.2} and Corollary \ref{f-cor1.3}. 

\begin{ack}
The authors thank Kento Fujita very much for interesting 
lectures on Fano manifolds and many useful comments. 
They also thank Professors St\'ephane Druel and Yoshinori Gongyo 
for informing them of \cite{araujo-druel}. 
The first 
author was partly supported by JSPS KAKENHI 
Grant Numbers JP16H03925, JP16H06337.
\end{ack}

We will work over $\mathbb C$, the complex number field, throughout 
this paper. In this paper, a {\em{scheme}} means a separated 
scheme of finite type over $\mathbb C$. 
We will use the theory of quasi-log schemes discussed 
in \cite[Chapter 6]{fujino-foundations}. 

\section{Sketch of Proof}\label{f-sec2}

In order to make Theorem \ref{f-thm1.2} more accessible, 
we give a sketch of proof of the following very special case 
of Theorem \ref{f-thm1.2} and Corollary \ref{f-cor1.3}. 
We note that $[X, K_X+\Delta]$ naturally becomes a quasi-log canonical 
pair when $(X, \Delta)$ is a log canonical pair. In this 
section, we will freely use some standard results of the minimal 
model theory for log canonical pairs (see \cite{fujino-fund}). 

\begin{thm}[Theorem \ref{f-thm1.2} for 
log canonical pairs]\label{f-thm2.1}
Let $(X, \Delta)$ be a projective 
log canonical pair with $\dim X=n$. 
Assume that $K_X+\Delta$ is not nef 
and that $-(K_X+\Delta)\equiv rH$ for some Cartier 
divisor $H$ on $X$ with $r>n$. 
Then $X\simeq \mathbb P^n$ with 
$\mathcal O_X(H)\simeq 
\mathcal O_{\mathbb P^n}(1)$. 
\end{thm}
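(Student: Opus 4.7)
The plan is to combine the cone theorem for log canonical pairs with the classical Hilbert-polynomial characterization of projective space. The key point is that the hypothesis $r>n$ forces a lower bound on the length of the relevant extremal ray, whence $\rho(X)=1$ and $H$ is ample; the rest is then a direct vanishing-plus-Riemann--Roch computation followed by Fujita's classification of polarized varieties of $\Delta$-genus zero.

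Since $K_X+\Delta$ is not nef, the cone theorem for lc pairs (see \cite{fujino-fund}) provides a $(K_X+\Delta)$-negative extremal ray $R$ spanned by a rational curve $C$ with $0<-(K_X+\Delta)\cdot C\leq 2n$. Writing $-(K_X+\Delta)\cdot C=r\,(H\cdot C)$ with $H\cdot C\in\mathbb{Z}_{>0}$ and $r>n$ forces $H\cdot C=1$ and the length $l(R)=r$. For the extremal contraction $\varphi\colon X\to Y$ of $R$, the Ionescu--Wisniewski inequality
\[
\dim E+\dim F\geq n+l(R)-1>2n-1
\]
(where $E$ is the exceptional locus and $F$ an irreducible component of a non-trivial fibre) rules out a birational $\varphi$. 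Hence $Y$ is a point, $\rho(X)=1$, and the Cartier divisor $H$ is ample.

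The Kawamata--Viehweg vanishing for lc pairs gives $H^i(X,\mathcal{O}_X(L))=0$ for all $i>0$ whenever $L-(K_X+\Delta)$ is ample. Taking $L=-mH$ for $0<m<r$ and $L=0$, and using that $-mH$ is not effective for $m>0$, we obtain $\chi(X,\mathcal{O}_X(-mH))=0$ for $1\leq m\leq r-1$ and $\chi(X,\mathcal{O}_X)=1$. The Hilbert polynomial $P(m)=\chi(X,\mathcal{O}_X(mH))$ has degree $n$, vanishes at $-1,\dots,-(r-1)$, and satisfies $P(0)=1$; hence $r-1\leq n$, so $r=n+1$ and $P(m)=\binom{m+n}{n}$, giving $H^n=1$ and $h^0(X,\mathcal{O}_X(H))=n+1$. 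The $\Delta$-genus $\Delta(X,H)=n+H^n-h^0(X,\mathcal{O}_X(H))=0$, and Fujita's classification of polarized varieties of $\Delta$-genus zero, together with $H^n=1$, identifies $(X,\mathcal{O}_X(H))$ with $(\mathbb{P}^n,\mathcal{O}_{\mathbb{P}^n}(1))$.

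Each of the steps above is individually standard in the lc setting, so for Theorem \ref{f-thm2.1} the main obstacle is organizational rather than technical. The substantive difficulty will only appear in Theorem \ref{f-thm1.2}, where the cone theorem, the length bound on extremal rays, and the Kawamata--Viehweg vanishing must all be upgraded to their quasi-log canonical counterparts; I anticipate that the three preparatory lemmas of Section \ref{f-sec4} are designed to supply precisely this transfer, and that the final identification with $\mathbb{P}^n$ will additionally require ruling out the existence of qlc centers.
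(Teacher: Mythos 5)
Your reduction to the case $\dim W=0$ is genuinely different from the paper's. The paper's Case \ref{f-case1} handles $\dim W\geq 1$ by pulling back an auxiliary divisor $B$ from $W$ to manufacture a log canonical center $C$ of $(X,\Delta+\varphi^*B)$ lying over a point with $\dim C\geq 1$, and then derives the contradiction $\chi(C,\mathcal O_C(tH))\equiv 0$ from the vanishing theorem applied to the (quasi-log) structure on $C$; you instead invoke the length bound $0<-(K_X+\Delta)\cdot C\leq 2\dim X$ from the cone theorem, deduce $H\cdot C=1$ and $l(R)=r>n$, and kill both the birational and the positive-dimensional-base cases with the Ionescu--Wi\'sniewski fiber-dimension inequality. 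The second half (vanishing plus Riemann--Roch, $H^n=1$, $h^0(X,\mathcal O_X(H))=n+1$, $\Delta$-genus zero, Kobayashi--Ochiai/Fujita) is identical to the paper's Case \ref{f-case2}. Your route is essentially the one the authors themselves describe in Remark \ref{rem5.1} as an alternative, so it is viable; but note two caveats. First, the inequality $\dim E+\dim F\geq n+l(R)-1$ is not off-the-shelf for arbitrary log canonical pairs: the paper reduces to a $\mathbb Q$-factorial dlt pair by normalization and dlt blow-up and perturbs the coefficients of $\Delta$ before appealing to the Andreatta--Wi\'sniewski argument (generalized in \cite{fujino-relative}), so you should either insert that reduction or cite the lc version explicitly rather than calling the step ``standard.'' Second, your anticipated strategy for Theorem \ref{f-thm1.2} --- upgrading the length bound to the quasi-log canonical setting --- is precisely what the paper says it does \emph{not} do: no sharp length estimate is available for qlc pairs, which is why the authors route everything through the vanishing theorem and the auxiliary-center construction of Lemma \ref{f-lem4.3}. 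What your approach buys is a shorter, more classical argument in the lc case; what the paper's approach buys is uniformity, since the same vanishing-theorem mechanism works verbatim for quasi-log canonical pairs.
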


\begin{proof}[Sketch of Proof of Theorem \ref{f-thm2.1}]
Since $K_X+\Delta$ is not nef, 
we have a $(K_X+\Delta)$-negative 
extremal contraction $\varphi\colon X\to W$ by the cone 
and contraction theorem for log canonical pairs 
(see \cite[Theorem 1.1]{fujino-fund}). 
\begin{case}[$\dim W\geq 1$]\label{f-case1} 
We can take an effective $\mathbb R$-Cartier divisor 
$B$ on $W$ with the following properties: 
\begin{itemize}
\item[(i)] $(X, \Delta+\varphi^*B)$ is log canonical 
outside finitely many points, and 
\item[(ii)] there exists a log canonical center $C$ of 
$(X, \Delta+\varphi^*B)$ such that 
$\varphi(C)$ is a point with $\dim C\geq 1$. 
\end{itemize}
In this situation, we obtain that 
$$-(K_X+\Delta+\varphi^*B)|_C\equiv rH|_C$$ and $H|_C$ is 
ample since $\varphi(C)$ is a point. 
Therefore, by the vanishing theorem for quasi-log schemes 
(see Lemma \ref{f-lem4.2} below), 
we obtain $$\chi(C, \mathcal O_C(tH))\equiv 0. 
$$ 
This is a contradiction since $H|_C$ is ample. 
This means that $\dim W\geq 1$ does not happen. 
\end{case}
\begin{case}[$\dim W=0$]\label{f-case2} 
Since $\varphi\colon X\to W$ is a 
$(K_X+\Delta)$-negative extremal contraction, we see that  
$H$ is ample. 
We can explicitly determine $$\chi (X, \mathcal O_X(tH))$$ by 
$-(K_X+\Delta)\equiv rH$ with $r>n$ and 
the vanishing theorem for log canonical pairs 
(see \cite[Theorem 8.1]{fujino-fund}). 
Then we get $H^n=1$ and 
$$
\dim _{\mathbb C} H^0(X, \mathcal O_X(H))=n+1. 
$$ 
Therefore, 
$$
\Delta(X, H)=n+H^n-\dim _{\mathbb C} H^0(X, \mathcal O_X(H))=0
$$ 
holds, 
where $\Delta(X, H)$ is Fujita's $\Delta$-genus of $(X, H)$. 
This implies $X\simeq \mathbb P^n$ with 
$\mathcal O_X(H)\simeq \mathcal O_{\mathbb P^n}(1)$ 
(see \cite[Theorem 2.1]{fujita1} 
or \cite[Theorem 1.1]{kobayashi-ochiai}). 
\end{case}
This is a sketch of the proof of Theorem \ref{f-thm2.1}. 
\end{proof}

When $(X, \Delta)$ is toric, 
Theorem \ref{f-thm2.1} was already established by the first 
author in \cite[Theorem 1.2]{fujino-toric}, 
which is an easy direct consequence 
of \cite[Theorem 0.1]{fujino-notes}. 
In \cite{fujino-notes} and \cite{fujino-toric}, 
the sharp estimate of lengths of extremal rational curves 
plays a crucial role. On the other hand, we will use some vanishing theorems 
for quasi-log schemes in this paper. 

\section{Preliminaries}\label{f-sec3} 

In this section, we collect some basic definitions of 
the minimal model program and the theory of quasi-log schemes. 
For the details, see \cite{fujino-fund} and 
\cite{fujino-foundations}. 

\medskip 

Let us recall singularities of pairs. 

\begin{defn}[Singularities of pairs]\label{f-def3.1}
A {\em{normal pair}} $(X, \Delta)$ consists of a normal 
variety $X$ and an $\mathbb R$-divisor 
$\Delta$ on $X$ such that $K_X+\Delta$ is $\mathbb R$-Cartier. 
Let $f\colon Y\to X$ be a projective 
birational morphism from a normal variety $Y$. 
Then we can write 
$$
K_Y=f^*(K_X+\Delta)+\sum _E a(E, X, \Delta)E
$$ 
with 
$$f_*\left(\underset{E}\sum a(E, X, \Delta)E\right)=-\Delta, 
$$ 
where $E$ runs over prime divisors on $Y$. 
We call $a(E, X, \Delta)$ the {\em{discrepancy}} of $E$ with 
respect to $(X, \Delta)$. 
Note that we can define the discrepancy $a(E, X, \Delta)$ for 
any prime divisor $E$ over $X$ by taking a suitable 
resolution of singularities of $X$. 
If $a(E, X, \Delta)\geq -1$ (resp.~$>-1$) for 
every prime divisor $E$ over $X$, 
then $(X, \Delta)$ is called {\em{sub log canonical}} (resp.~{\em{sub 
kawamata log terminal}}). 
We further assume that $\Delta$ is effective. 
Then $(X, \Delta)$ is 
called {\em{log canonical}} and {\em{kawamata log terminal}} 
if it is sub log canonical and sub kawamata log terminal, respectively. 

Let $(X, \Delta)$ be a normal pair. 
If there exist a projective birational morphism 
$f\colon Y\to X$ from a normal variety $Y$ and a prime divisor $E$ on $Y$ 
such that $(X, \Delta)$ is 
sub log canonical in a neighborhood of the 
generic point of $f(E)$ and that 
$a(E, X, \Delta)=-1$, then $f(E)$ is called a {\em{log canonical center}} of 
$(X, \Delta)$. 
\end{defn}

\begin{defn}[Operations for $\mathbb R$-divisors]\label{f-def3.2}
Let $V$ be an equidimensional 
reduced scheme. 
An {\em{$\mathbb R$-divisor}} $D$ on $V$ is 
a finite formal sum 
$$
\sum _{i=1}^l d_i D_i
$$ 
where $D_i$ is an irreducible reduced closed subscheme 
of $V$ of pure codimension one with 
$D_i\ne D_j$ for $i\ne j$ and 
$d_i$ is a real number for every $i$. 
We put 
$$
D^{<1}=\sum _{d_i<1}d_iD_i, \quad 
D^{=1}=\sum _{d_i=1} D_i, \quad \text{and} 
\quad D^{>1}=\sum _{d_i>1}d_iD_i. 
$$ 
For every real number $x$, $\lceil x\rceil$ is the integer 
defined by $x\leq \lceil x\rceil <x+1$. 
Then we put 
$$
\lceil D\rceil =\sum _{i=1}^l \lceil d_i \rceil D_i 
\quad \text{and}\quad \lfloor D\rfloor =-\lceil -D\rceil. 
$$
\end{defn}

\begin{defn}[$\sim _{\mathbb R}$ and $\equiv$]\label{f-def3.3}
Let $B_1$ and $B_2$ be $\mathbb R$-Cartier 
divisors on a scheme $X$. 
Then $B_1\sim _{\mathbb R} B_2$ means that 
$B_1$ is {\em{$\mathbb R$-linearly equivalent}} 
to $B_2$, that is, 
$B_1-B_2$ is a finite $\mathbb R$-linear combination 
of principal Cartier divisors. 
When $X$ is complete, $B_1\equiv B_2$ means 
that $B_1$ is {\em{numerically equivalent}} to $B_2$. 
\end{defn}

In order to define quasi-log schemes, we need the notion of 
globally embedded simple normal crossing pairs. 

\begin{defn}[{Globally embedded simple normal crossing 
pairs, see \cite[Definition 6.2.1]{fujino-foundations}}]\label{f-def3.4} 
Let $Y$ be a simple normal crossing divisor 
on a smooth 
variety $M$ and let $D$ be an $\mathbb R$-divisor 
on $M$ such that 
$\Supp (D+Y)$ is a simple normal crossing divisor on $M$ and that 
$D$ and $Y$ have no common irreducible components. 
We put $B_Y=D|_Y$ and consider the pair $(Y, B_Y)$. 
We call $(Y, B_Y)$ a {\em{globally embedded simple normal 
crossing pair}} and $M$ the {\em{ambient space}} 
of $(Y, B_Y)$. A {\em{stratum}} of $(Y, B_Y)$ is a log canonical  
center of $(M, Y+D)$ that is contained in $Y$. 
\end{defn}

Let us recall the definition of quasi-log schemes. 

\begin{defn}[{Quasi-log 
schemes, see \cite[Definition 6.2.2]{fujino-foundations}}]\label{f-def3.5}
A {\em{quasi-log scheme}} is a scheme $X$ endowed with an 
$\mathbb R$-Cartier divisor 
(or $\mathbb R$-line bundle) 
$\omega$ on $X$, a proper closed subscheme 
$X_{-\infty}\subset X$, and a finite collection $\{C\}$ of reduced 
and irreducible subschemes of $X$ such that there is a 
proper morphism $f\colon (Y, B_Y)\to X$ from a globally 
embedded simple 
normal crossing pair satisfying the following properties: 
\begin{itemize}
\item[(1)] $f^*\omega\sim_{\mathbb R}K_Y+B_Y$. 
\item[(2)] The natural map 
$\mathcal O_X
\to f_*\mathcal O_Y(\lceil -(B_Y^{<1})\rceil)$ 
induces an isomorphism 
$$
\mathcal I_{X_{-\infty}}\overset{\simeq}{\longrightarrow} 
f_*\mathcal O_Y(\lceil 
-(B_Y^{<1})\rceil-\lfloor B_Y^{>1}\rfloor),  
$$ 
where $\mathcal I_{X_{-\infty}}$ is the defining ideal sheaf of 
$X_{-\infty}$. 
\item[(3)] The collection of reduced and irreducible subschemes 
$\{C\}$ coincides with the images 
of $(Y, B_Y)$-strata that are not included in $X_{-\infty}$. 
\end{itemize}
We simply write $[X, \omega]$ to denote 
the above data 
$$
\bigl(X, \omega, f\colon (Y, B_Y)\to X\bigr)
$$ 
if there is no risk of confusion. 
Note that a quasi-log scheme $[X, \omega]$ is 
the union of $\{C\}$ and $X_{-\infty}$. 
The reduced and irreducible subschemes $C$ 
are called the {\em{qlc strata}} of $[X, \omega]$, 
$X_{-\infty}$ is called the {\em{non-qlc locus}} 
of $[X, \omega]$, and $f\colon (Y, B_Y)\to X$ is 
called a {\em{quasi-log resolution}} 
of $[X, \omega]$. 
We sometimes use $\Nqlc(X, 
\omega)$ to denote 
$X_{-\infty}$. 
If a qlc stratum $C$ of $[X, \omega]$ is not an 
irreducible component of $X$, then 
it is called a {\em{qlc center}} of $[X, \omega]$. 
\end{defn}

\begin{defn}[{Quasi-log canonical 
pairs, see \cite[Definition 6.2.9]{fujino-foundations}}]
\label{f-def3.6}
Let 
$$
\left(X, \omega, f\colon (Y, B_Y)\to X\right)
$$ 
be a quasi-log scheme. 
If $X_{-\infty}=\emptyset$, then 
it is called a {\em{quasi-log canonical pair}}. 
\end{defn}

The following example is very important. 
Example \ref{f-ex3.7} shows that we can treat log canonical 
pairs as quasi-log canonical pairs. 

\begin{ex}[{\cite[6.4.1]{fujino-foundations}}]\label{f-ex3.7}
Let $(X, \Delta)$ be a normal pair such that 
$\Delta$ is effective. 
Let $f\colon Y\to X$ be a resolution of singularities such that 
$$
K_Y+B_Y=f^*(K_X+\Delta)
$$ 
and that $\Supp B_Y$ is a simple normal crossing divisor on $Y$. 
We put $\omega=K_X+\Delta$. 
Then 
$$
\left(X, \omega, f\colon(Y, B_Y)\to X\right)
$$
becomes a quasi-log scheme. 
By construction, $(X, \Delta)$ is log canonical 
if and only if $[X, \omega]$ is quasi-log canonical. 
We note that $C$ is a log canonical 
center of $(X, B)$ if and only if $C$ is a qlc center 
of $[X, \omega]$. 
\end{ex}

For the basic properties of quasi-log schemes, 
see \cite[Chapter 6]{fujino-foundations}. 

\section{Lemmas}\label{f-sec4}

In this section, we prepare three lemmas on 
quasi-log schemes for the proof of 
Theorem \ref{f-thm1.2}. The first one is an easy consequence of 
Fujita's theory of $\Delta$-genus (see \cite{fujita1}, 
\cite{fujita2}, \cite{fujita-book} and 
\cite[Chapter 3]{iitaka}) and the theory of 
quasi-log schemes (see \cite[Chapter 6]{fujino-foundations}). 

\begin{lem}\label{f-lem4.1}
Let $[X, \omega]$ be a projective quasi-log canonical 
pair such that 
$X$ is irreducible with $\dim X=n\geq 1$. 
Let $H$ be an ample Cartier divisor on $X$. 
Assume that $-\omega\equiv rH$ for some $r>n$. 
Then $X\simeq \mathbb P^n$, $\mathcal O_X(H)\simeq 
\mathcal O_{\mathbb P^n}(1)$, 
$r\leq n+1$, and there are no qlc centers of $[X, \omega]$. 
\end{lem}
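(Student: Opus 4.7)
The plan is to compute the Hilbert polynomial $P(t):=\chi(X,\mathcal{O}_X(tH))$ explicitly using the vanishing theorem for quasi-log schemes, and then identify $X$ through Fujita's classification of polarized varieties with $\Delta$-genus zero. Lemma~\ref{f-lem4.2} applied to $L=tH$ gives $H^i(X,\mathcal{O}_X(tH))=0$ for all $i>0$ and every integer $t>-r$, since $tH-\omega\equiv(t+r)H$ is ample whenever $t>-r$. Because $X$ is irreducible and reduced and $H$ is ample, $H^0(X,\mathcal{O}_X(tH))=0$ for every integer $t<0$. Hence $P(t)=0$ for every integer $t$ with $-r<t<0$. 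Now $P(t)$ is a polynomial in $t$ of degree exactly $n$ with leading coefficient $H^n/n!>0$, while the interval $(-r,0)$ contains $\lceil r\rceil-1$ integers. If $r>n+1$ this would force $\geq n+1$ distinct integer roots of a degree-$n$ polynomial, which is impossible; so $r\leq n+1$ and $P(t)=\frac{H^n}{n!}(t+1)(t+2)\cdots(t+n)$.

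Evaluating at $t=0$ and using $\chi(X,\mathcal{O}_X)=h^0(X,\mathcal{O}_X)=1$ (the former by Lemma~\ref{f-lem4.2} with $L=0$, the latter by integrality of $X$) yields $H^n=1$, and then evaluation at $t=1$ gives $h^0(X,\mathcal{O}_X(H))=n+1$. Therefore Fujita's $\Delta$-genus satisfies $\Delta(X,H)=n+H^n-h^0(\mathcal{O}_X(H))=0$, and \cite[Theorem 2.1]{fujita1} (or \cite[Theorem 1.1]{kobayashi-ochiai}) identifies $(X,\mathcal{O}_X(H))$ with $(\mathbb{P}^n,\mathcal{O}_{\mathbb{P}^n}(1))$.

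For the final assertion, I would argue by contradiction: suppose $C$ is a qlc center of $[X,\omega]$, with $d:=\dim C\leq n-1$. By adjunction for quasi-log canonical pairs, $[C,\omega|_C]$ is itself a projective quasi-log canonical pair with $C$ irreducible, $H|_C$ ample, and $-\omega|_C\equiv rH|_C$. The ideal-sheaf form of the qlc vanishing theorem gives $H^1(X,\mathcal{I}_C\otimes\mathcal{O}_X(-mH))=0$ for every integer $m<r$, so the restriction map $H^0(X,\mathcal{O}_X(-mH))\twoheadrightarrow H^0(C,\mathcal{O}_C(-mH|_C))$ is surjective. Since $H^0(X,\mathcal{O}_X(-mH))=0$ for every $m\geq 1$, we deduce $H^0(C,\mathcal{O}_C(-mH|_C))=0$ for every integer $1\leq m<r$. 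Combined with Lemma~\ref{f-lem4.2} on $C$, which kills $H^i(C,\mathcal{O}_C(-mH|_C))$ for $i>0$ and $m<r$, we obtain $\chi(C,\mathcal{O}_C(-mH|_C))=0$ for all such $m$. But this is a polynomial of degree $d$ in $m$ with at least $\lceil r\rceil-1\geq n\geq d+1$ distinct integer roots, hence vanishes identically, contradicting $\chi(C,\mathcal{O}_C)=1$.

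The main technical obstacle is verifying that Fujita's $\Delta$-genus classification applies in the (possibly non-normal) quasi-log canonical setting, since the classical references are generally stated for normal polarized varieties. A secondary subtlety is the ideal-sheaf version of the qlc vanishing theorem used in the qlc-center step, which is a standard companion to Lemma~\ref{f-lem4.2} in Fujino's vanishing package for quasi-log schemes and is needed precisely to rule out zero-dimensional qlc centers (for $d\geq 1$ ampleness of $H|_C$ already forces $H^0(C,\mathcal{O}_C(-mH|_C))=0$ directly).
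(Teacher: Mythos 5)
Your proposal is correct and follows essentially the same route as the paper: vanishing for quasi-log canonical pairs forces $\chi(X,\mathcal O_X(tH))=\frac{1}{n!}(t+1)\cdots(t+n)$, whence $H^n=1$, $h^0(H)=n+1$, $\Delta(X,H)=0$, and $X\simeq\mathbb P^n$ by Fujita/Kobayashi--Ochiai, with $r\le n+1$ and the absence of qlc centers both obtained by root-counting against the vanishing theorem. The only (cosmetic) difference is that you merge the zero-dimensional and positive-dimensional qlc-center cases into a single argument via the ideal-sheaf vanishing $H^1(X,\mathcal I_C\otimes\mathcal O_X(-mH))=0$, whereas the paper treats points by the evaluation map and higher-dimensional centers by applying its $r\le\dim C+1$ bound to $[C,\omega|_C]$ after adjunction; both rest on the same citation of Fujino's Theorem 6.3.5.
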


\begin{proof}
We will use Fujita's theory of $\Delta$-genus (see 
\cite{fujita1}, \cite{fujita2}, 
\cite[Chapter I]{fujita-book}, and \cite[Chapter 3]{iitaka}) and the 
theory of quasi-log schemes (see \cite[Chapter 6]{fujino-foundations}). 
\begin{step}\label{f-4.1-step1}
Let us consider 
$$
\chi (X, \mathcal O_X(tH))=\sum _{i=0}^n (-1)^i \dim _{\mathbb C} 
H^i(X, \mathcal O_X(tH)). 
$$
Since $H$ is ample, it is a nontrivial polynomial of degree $n$. 
Since $$tH-\omega\equiv (t+r)H 
$$
with $r>n$ by assumption, we have 
$$
H^i(X, \mathcal O_X(tH))=0
$$ 
for $i>0$ and $t\geq -n$ by \cite[Theorem 6.3.5 (ii)]{fujino-foundations}. 
Since 
$$
H^0(X, \mathcal O_X(tH))=0
$$ 
for $t<0$ and 
$$
\chi (X, \mathcal O_X)=\dim _{\mathbb C}H^0(X, \mathcal O_X)=1, 
$$
we have 
\begin{equation}\label{f-eq4.1}
\chi (X, \mathcal O_X(tH))=\frac{1}{n!} (t+1)\cdots (t+n). 
\end{equation}
Therefore, we obtain that $H^n=1$ and 
$$
\dim _{\mathbb C} H^0(X, \mathcal O_X(H))=
\chi (X, \mathcal O_X(H))=n+1. 
$$
This means 
$$
\Delta (X, H)=n+H^n-\dim _{\mathbb C} H^0(X, 
\mathcal O_X(H))=0, 
$$ 
where $\Delta(X, H)$ is Fujita's $\Delta$-genus of $(X, H)$. 
Thus we obtain that $X\simeq \mathbb P^n$ and $\mathcal 
O_X(H)\simeq \mathcal O_{\mathbb P^n}(1)$ 
(see \cite[Theorem 2.1]{fujita1} or \cite[Theorem 1.1]{kobayashi-ochiai}). 
\end{step}
\begin{step}\label{f-4.1-step2} 
In this step, we will see that $r\leq n+1$ always hold true. 

We assume that $r>n+1$ holds true. Then, by \cite[Theorem 6.3.5 (ii)]
{fujino-foundations}, we have 
$$
H^i(X, \mathcal O_X(-(n+1)H))=0
$$ 
for $i>0$. 
Therefore, we obtain 
$$
\chi (X, \mathcal O_X(-(n+1)H))=\dim _{\mathbb C}H^0(X, 
\mathcal O_X(-(n+1)H))=0. 
$$ 
On the other hand, by \eqref{f-eq4.1}, we have 
$$
\chi (X, \mathcal O_X(-(n+1)H))=(-1)^n\ne 0. 
$$
This is a contradiction. This means that $r\leq n+1$ always 
holds. 
\end{step}
\begin{step}\label{f-4.1-step3}
In this step, we will see that $[X, \omega]$ has no 
qlc centers. 

Assume that there exists a zero-dimensional 
qlc center $P$ of $[X, \omega]$. 
Then the evaluation map 
$$
H^0(X, \mathcal O_X(-H))\to \mathbb C (P)
$$ 
is surjective since $$H^1(X, \mathcal I_P\otimes 
\mathcal O_X(-H))=0$$ by \cite[Theorem 6.3.5 (ii)]{fujino-foundations}, 
where 
$\mathcal I_P$ is the defining 
ideal sheaf of $P$ on $X$. 
We note that 
$H$ is ample and $-\omega\equiv rH$ with $r>\dim X\geq 1$. 
This means that $$H^0(X, \mathcal O_X(-H))\ne 0. $$ 
This is a contradiction since $H$ is ample. 
Therefore, there are no zero-dimensional qlc centers of $[X, \omega]$. 

Assume that there exists a qlc center $C$ of $[X, \omega]$ 
with $\dim C\geq 1$. 
By \cite[Theorem 6.3.5 (i)]{fujino-foundations}, 
$[C, \omega|_C]$ is a quasi-log canonical pair with 
$\dim C<\dim X$. Since $$-\omega\equiv rH$$ with 
$r>n$, 
we have 
$$
-\omega|_C\equiv rH|_C
$$ 
with $r>n\geq \dim C+1$. 
This contradicts the result established in Step \ref{f-4.1-step2}. 
It means that there are no qlc centers of $[X, \omega]$. 
\end{step}
We finish the proof of Lemma \ref{f-lem4.1}. 
\end{proof}

The second one is an easy lemma on the vanishing theorem 
for quasi-log schemes. 

\begin{lem}[Vanishing theorem for 
quasi-log schemes]\label{f-lem4.2}
Let $[X, \omega]$ be a projective quasi-log scheme 
with $\dim X_{-\infty}=0$ or $X_{-\infty}=\emptyset$. 
Let $L$ be a Cartier divisor on $X$ such that 
$L-\omega$ is ample. 
Then 
$$
H^i(X, \mathcal O_X(L))=0
$$ 
for every $i>0$. 
\end{lem}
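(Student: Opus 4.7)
The plan is to reduce the statement to the standard vanishing theorem for quasi-log schemes from \cite[Theorem 6.3.5]{fujino-foundations}, which gives, for any projective quasi-log scheme $[X,\omega]$ and any Cartier divisor $L$ with $L-\omega$ ample (in particular nef and log big with respect to the quasi-log structure),
\[
H^i\bigl(X,\,\mathcal I_{X_{-\infty}}\otimes\mathcal O_X(L)\bigr)=0 \quad \text{for every } i>0,
\]
where $\mathcal I_{X_{-\infty}}$ is the defining ideal sheaf of $X_{-\infty}=\Nqlc(X,\omega)$. This is exactly the output of the theory of quasi-log schemes, so the work is to upgrade it from the twisted sheaf $\mathcal I_{X_{-\infty}}\otimes\mathcal O_X(L)$ to $\mathcal O_X(L)$ itself under the dimension hypothesis on $X_{-\infty}$.

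If $X_{-\infty}=\emptyset$, then $\mathcal I_{X_{-\infty}}=\mathcal O_X$ and the conclusion is immediate from the cited vanishing. So I may assume $\dim X_{-\infty}=0$. In that case $X_{-\infty}$ is a zero-dimensional projective scheme, and for any coherent sheaf $\mathcal F$ on $X_{-\infty}$ we have $H^i(X_{-\infty},\mathcal F)=0$ for all $i>0$ by Grothendieck vanishing (or simply because affine zero-dimensional schemes have no higher cohomology).

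I then consider the short exact sequence
\[
0\longrightarrow \mathcal I_{X_{-\infty}}\otimes\mathcal O_X(L)\longrightarrow \mathcal O_X(L)\longrightarrow \mathcal O_{X_{-\infty}}\otimes \mathcal O_X(L)\longrightarrow 0
\]
and take the associated long exact sequence of cohomology. For each $i>0$, the term on the left vanishes by the cited quasi-log vanishing, and the term on the right vanishes because $X_{-\infty}$ is zero-dimensional. Hence $H^i(X,\mathcal O_X(L))=0$ for every $i>0$, as required.

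There is essentially no main obstacle: the entire content is packaged in \cite[Theorem 6.3.5]{fujino-foundations}; the only point to keep straight is that the hypothesis $\dim X_{-\infty}\le 0$ is used precisely so that $X_{-\infty}$ contributes nothing to the long exact sequence in positive degrees, allowing the passage from the ideal-twisted vanishing to the untwisted one.
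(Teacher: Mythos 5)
Your proposal is correct and follows essentially the same route as the paper: both tensor the sequence $0\to \mathcal I_{X_{-\infty}}\to \mathcal O_X\to \mathcal O_{X_{-\infty}}\to 0$ with $\mathcal O_X(L)$, apply \cite[Theorem 6.3.5 (ii)]{fujino-foundations} to kill $H^i(X,\mathcal I_{X_{-\infty}}\otimes\mathcal O_X(L))$, and use $\dim X_{-\infty}=0$ to kill the cohomology supported on $X_{-\infty}$. The empty-$X_{-\infty}$ case is handled identically as a direct special case of the cited theorem.
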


\begin{proof}
If $X_{-\infty}=\emptyset$, then the statement is a special 
case of \cite[Theorem 6.3.5 (ii)]{fujino-foundations}. 
Therefore, from now on, we may assume that 
$X_{-\infty}\ne\emptyset$. 

Let us consider the following short exact sequence: 
$$
0\to \mathcal I_{X_{-\infty}}\to \mathcal O_X\to \mathcal O_{X_{-\infty}}\to 0. 
$$ 
Then we obtain a long exact sequence: 
\begin{equation}\label{f-eq4.2}
\cdots \to H^i(X, \mathcal I_{X_{-\infty}}\otimes 
\mathcal O_X(L))\to 
H^i(X, \mathcal O_X(L))\to H^i(X, \mathcal O_{X_{-\infty}}(L))\to 
\cdots. 
\end{equation}
By \cite[Theorem 6.3.5 (ii)]{fujino-foundations}, 
we get 
$$
H^i(X, \mathcal I_{X_{-\infty}}\otimes \mathcal O_X(L))=0
$$ 
for every $i>0$. 
Since $\dim X_{-\infty}=0$ by assumption, 
we have 
$$
H^i(X, \mathcal O_{X_{-\infty}}(L))=0
$$ 
for every $i>0$. 
Therefore, by \eqref{f-eq4.2}, 
we see that 
$$
H^i(X, \mathcal O_X(L))=0
$$ 
holds true for every $i>0$. 
\end{proof}

The final one is a somewhat technical lemma 
(see \cite[Lemmas 3.1 and 3.2]{fujino-relative}). 

\begin{lem}\label{f-lem4.3} 
Let $[X, \omega]$ be a quasi-log canonical pair such that 
$X$ is irreducible and 
let $\varphi\colon X\to W$ be a proper surjective morphism 
onto a quasi-projective variety $W$ with $\dim W\geq 1$. 
Let $P\in W$ be a closed point such that $\dim \varphi^{-1}(P)\geq 1$. 
Then we can construct an effective $\mathbb R$-Cartier 
divisor $B$ on $W$ such that 
$[X, \omega+\varphi^*B]$ is a quasi-log scheme with 
the following properties: 
\begin{itemize}
\item[(i)] $[X, \omega+\varphi^*B]$ is quasi-log canonical 
outside finitely many points, and 
\item[(ii)] there exists a qlc center $C$ of 
$[X, \omega+\varphi^*B]$ such that $\varphi(C)=P$ with $\dim C\geq 1$. 
\end{itemize}
\end{lem}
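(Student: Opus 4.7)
The plan is to adapt the classical MMP technique of producing non-klt centers over a prescribed point to the quasi-log setting, following \cite[Lemmas 3.1 and 3.2]{fujino-relative}. The construction is to choose an auxiliary divisor on $W$ with high multiplicity at $P$ but generic elsewhere, pull it back to $X$, and run an lc-tie-breaking argument on a quasi-log resolution of $[X, \omega]$.

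First I would set $m = \dim W$, $F = \varphi^{-1}(P)$ (so $\dim F \geq 1$), fix an ample Cartier divisor $A$ on $W$ (available since $W$ is quasi-projective), and fix a quasi-log resolution $f\colon (Y, B_Y) \to X$ of $[X, \omega]$. For $k \gg 0$ the linear system $|kA|$ contains sections vanishing at $P$ to any prescribed order, so I can choose an effective $\mathbb Q$-Cartier divisor $D$ on $W$, proportional to $A$, with $\mathrm{mult}_P D$ as large as needed and generic outside a small neighborhood of $P$. Next I would pass to a common log resolution of the ambient data of $f$ that also resolves $(Y, B_Y + f^*\varphi^*D)$, and define the qlc threshold $c = \sup\{t \geq 0 : [X, \omega + t\varphi^*D] \text{ is qlc along } F\}$. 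This number is positive (since $[X, \omega]$ is already qlc along $F$) and finite (since high multiplicity of $D$ at $P$ forces non-qlc behavior for large $t$). Setting $B = cD$, a new stratum of discrepancy $-1$ appears on the common log resolution, whose image $C$ in $X$ is a new qlc center of the perturbed pair; by genericity of $D$ away from $P$ and by standard tie-breaking to isolate a single stratum, one arranges $\varphi(C) = P$, and $\dim C \geq 1$ is forced because $C$ is cut out inside the positive-dimensional fiber $F$.

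The main difficulty is not any one step but the combined bookkeeping: one must verify that the perturbed data from the common log resolution actually satisfy axioms (1)--(3) of Definition \ref{f-def3.5}, that the non-qlc locus is zero-dimensional away from the desired center (ensuring (i)), and that the distinguished qlc center $C$ has the claimed image and positive dimension (ensuring (ii)). This is the technical heart of \cite[Lemmas 3.1 and 3.2]{fujino-relative} and requires careful discrepancy calculations on the common log resolution together with a Bertini-type genericity argument for $D$, chosen so that outside a finite set the pullback $\varphi^*D$ contributes only boundary components with coefficients strictly less than one.
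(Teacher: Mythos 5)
Your overall strategy --- perturb $\omega$ by the pullback of a divisor on $W$ concentrated at $P$ and take a qlc threshold --- is the same as the paper's, but as written the argument has two genuine gaps, both occurring at exactly the points you flag as ``standard.'' First, the claim that $c=\sup\{t\geq 0: [X,\omega+t\varphi^*D]\ \text{is qlc along}\ F\}$ is positive ``since $[X,\omega]$ is already qlc along $F$'' fails whenever $[X,\omega]$ has a zero-dimensional qlc center $Q$ inside $F=\varphi^{-1}(P)$. Such a $Q$ is perfectly compatible with $[X,\omega]$ being quasi-log canonical, the strata of $(Y,B_Y)$ over $Q$ already have coefficient one, and since $Q\in F\subset \Supp \varphi^*D$ every $t>0$ pushes those strata past coefficient one; hence $c=0$ and your construction produces nothing. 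This is precisely why the paper's proof splits into three cases: when every qlc center of $[X,\omega]$ in $\varphi^{-1}(P)$ is zero-dimensional (Case \ref{f-4.3-step3}), it first discards the components $Y'$ of $Y$ lying over the union $X'$ of those centers, uses $\mathcal I_{X'}=f''_*\mathcal O_{Y''}(\lceil -(B_{Y''}^{<1})\rceil -Y'|_{Y''})$ to check that the truncated datum is still a quasi-log structure on $X$ whose non-qlc locus is the zero-dimensional $X'$, and only then perturbs. Your proposal has no analogue of this step.

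Second, your justification that $\dim C\geq 1$ ``is forced because $C$ is cut out inside the positive-dimensional fiber $F$'' is not valid: a qlc center contained in a positive-dimensional fiber can be a point, and at the naive threshold the first stratum to reach coefficient one may well map to a single point of $F$. The paper avoids this by invoking \cite[Lemma 6.3.13]{fujino-foundations}, which chooses $0<c<1$ so that \emph{simultaneously} every component of the $(\,\cdot\,)^{>1}$-part has zero-dimensional image under $f$ (these become non-qlc points, which is exactly why conclusion (i) of the lemma only asserts quasi-log canonicity outside finitely many points) and some component of the $(\,\cdot\,)^{=1}$-part has image of dimension $\geq 1$. In general one must go \emph{past} your supremum, sacrificing finitely many points to the non-qlc locus, before a positive-dimensional center appears, so $B=cD$ with the plain threshold $c$ need not satisfy (ii). With these two repairs --- the case analysis on pre-existing centers in the fiber, and the two-sided choice of $c$ from Lemma 6.3.13 in place of the naive supremum --- your argument becomes essentially the paper's proof; the remaining cosmetic difference (a single member of $|kA|$ with high multiplicity at $P$ versus $n+1$ general very ample divisors through $P$) is immaterial.
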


\begin{proof} 
We divide the proof into several cases. 

\setcounter{case}{0}
\begin{case}\label{f-4.3-step1}
In this case, we assume that 
there are no qlc centers of $[X, \omega]$ in $\varphi^{-1}(P)$. 

Let $f\colon (Y, B_Y)\to X$ be a quasi-log resolution of $[X, \omega]$ 
as in Definition \ref{f-def3.5}. 
We take general very ample Cartier divisors 
$B_1, \ldots, B_{n+1}$ on $W$ such that 
$P\in \Supp B_i$ for every $i$. 
By \cite[Proposition 6.3.1]{fujino-foundations}, 
we may further assume that 
$$
\left(Y, \sum _{i=1}^{n+1} (\varphi\circ f)^*B_i +\Supp B_Y\right)
$$ 
is a globally embedded simple normal crossing pair 
(see \cite[Theorem 3.35]{kollar}). 
By \cite[Lemma 6.3.13]{fujino-foundations}, 
we can take $0<c<1$ with the following 
properties: 
\begin{itemize}
\item[(a)] $\left(B_Y+c\sum _{i=1}^{n+1} (\varphi\circ f)^*B_i\right)^{>1}=0$ 
or 
$\dim f\left(\Supp \left( 
B_Y+c\sum _{i=1}^{n+1} (\varphi\circ f)^*B_i\right)^{>1}
\right)=0$, and 
\item[(b)] there exists an irreducible component $G$ of 
$\left(B_Y+c\sum _{i=1}^{n+1} (\varphi\circ f)^*B_i\right)^{=1}$ 
such that 
$\dim f(G)\geq 1$. 
\end{itemize}
We put $B=c \sum _{i=1}^{n+1} B_i$. 
Then, by construction, 
we see that 
$$
f\colon \left(Y, B_Y+(\varphi\circ f)^*B\right) 
\to [X, \omega+\varphi^*B] 
$$ 
gives a desired quasi-log structure on $[X, \omega+\varphi^*B]$. 
\end{case}
\begin{case}\label{f-4.3-step2}
In this case, we assume that 
there exists a qlc center $C$ of $[X, \omega]$ in $\varphi^{-1}(P)$ 
with $\dim C\geq 1$. 

Obviously, it is sufficient to put $B=0$. 
\end{case}
\begin{case}\label{f-4.3-step3} 
In this case, we assume that every qlc center 
of $[X, \omega]$ contained in $\varphi^{-1}(P)$ is zero-dimensional. 

Let $f\colon (Y, B_Y)\to X$ be a quasi-log resolution 
of $[X, \omega]$ as in Definition \ref{f-def3.5}. 
We take general very ample Cartier divisors $B_1, \ldots, 
B_{n+1}$ on $W$ such that 
$P\in \Supp B_i$ for every $i$ as in Case \ref{f-4.3-step1}. 
Let $X'$ be the union of 
all qlc centers contained in $\varphi^{-1}(P)$. 
By \cite[Proposition 6.3.1]{fujino-foundations}, 
we may assume that 
the union of all strata of $(Y, B_Y)$ mapped to 
$X'$ by $f$, which is denoted by $Y'$, is a union of 
some irreducible components of $Y$. 
We put $Y''=Y-Y'$, $K_{Y''}+B_{Y''}=(K_Y+B_Y)|_{Y''}$, 
and $f''=f|_{Y''}$. 
We may further assume that 
$$
\left(Y'', \sum _{i=1}^{n+1} (\varphi\circ f'')^*B_i +\Supp B_{Y''}\right)
$$ 
is a globally embedded simple normal crossing pair by 
\cite[Proposition 6.3.1]{fujino-foundations} and 
\cite[Theorem 3.35]{kollar}. 
We note that by 
the proof of \cite[Theorem 6.3.5 (i)]{fujino-foundations} 
$$
\mathcal I_{X'}=f''_*\mathcal O_{Y''} 
(\lceil -(B^{<1}_{Y''})\rceil -Y'|_{Y''})
$$ 
holds true, where $\mathcal I_{X'}$ is 
the defining ideal sheaf of $X'$ on $X$. 
We also note that $B_{Y''}\geq Y'|_{Y''}$ by construction. 
By \cite[Lemma 6.3.13]{fujino-foundations}, 
we can take $0<c<1$ with the following properties: 
\begin{itemize}
\item[(c)] $\dim f''\left(\Supp \left( 
B_{Y''}+c\sum _{i=1}^{n+1} (\varphi\circ f'')^*B_i\right)^{>1}
\right)=0$, and 
\item[(d)] there exists an irreducible component $G$ of 
$\left(B_{Y''}+c\sum _{i=1}^{n+1} (\varphi\circ f'')^*B_i\right)^{=1}$ such that 
$\dim f''(G)\geq 1$. 
\end{itemize}
We put $B=c\sum _{i=1}^{n+1}B_i$. 
Then, by construction, 
we see that 
$$
f''\colon \left(Y'', B_{Y''}+(\varphi\circ f'')^*B\right) 
\to [X, \omega+\varphi^*B] 
$$ 
gives a desired quasi-log structure on $[X, \omega+\varphi^*B]$. 
\end{case} 
In any case, we got a desired effective $\mathbb R$-Cartier 
divisor $B$ on $W$. 
We note that $[X, \omega+\varphi^*B]$ is quasi-log 
canonical outside $\varphi^{-1}(P)$ by construction. 
\end{proof}

\section{Proof}\label{f-sec5}

In this section, we will prove Theorem \ref{f-thm1.2} and 
Corollary \ref{f-cor1.3} by using the lemmas obtained in Section \ref{f-sec4}. 

\medskip 

Let us prove Theorem \ref{f-thm1.2}, which is 
the main result of this paper. 

\begin{proof}[Proof of Theorem \ref{f-thm1.2}]
In this proof, we put $H=-D$. 
\setcounter{case}{0}
\begin{case}\label{f-5-step1}
In this case, we assume that $X$ is irreducible. 

Since $\omega$ is not nef, we can take an $\omega$-negative 
extremal contraction $\varphi\colon X\to W$ by the cone 
and contraction theorem of quasi-log canonical pairs 
(see \cite[Theorems 6.7.3 and 6.7.4]{fujino-foundations}). 
If $\dim W\geq 1$, then we can take an effective 
$\mathbb R$-Cartier divisor $B$ on $W$ satisfying 
the properties in Lemma \ref{f-lem4.3}. 
Let $C$ be a qlc center of $[X, \omega+\varphi^*B]$ as in 
Lemma \ref{f-lem4.3}. 
We put $$C'=C\cup \Nqlc(X, \omega+\varphi^*B). 
$$ 
By adjunction (see \cite[Theorem 6.3.5 (i)]{fujino-foundations}), 
$[C', (\omega+\varphi^*B)|_{C'}]$ is a quasi-log scheme. 
We note that there exists the following short exact sequence: 
\begin{equation}\label{f-eq5.1}
0\longrightarrow \Ker\alpha\longrightarrow 
\mathcal O_{C'}\overset{\alpha}\longrightarrow 
\mathcal O_C\longrightarrow 0 
\end{equation} 
such that $\Ker\alpha=0$ or 
the support of $\Ker\alpha$ is zero-dimensional. 
We also note that 
$$
-\left(\omega+\varphi^*B\right)|_{C'}\equiv rH|_{C'}
$$ 
since $\dim \varphi(C')=0$. 
By Lemma \ref{f-lem4.2} and 
\eqref{f-eq5.1},  
$$
H^i(C, \mathcal O_C(tH))=H^i(C', \mathcal O_{C'}(tH))=0
$$ 
for $i>0$ and $t\geq -n$ since $r>n$ by assumption. 
Since $H|_{C}$ is ample, 
we have 
$$
H^0(C, \mathcal O_C(tH))=0
$$ 
for $t<0$. 
This means that 
$$
\chi (C, \mathcal O_C(tH))=0
$$ 
for $t=-n, \ldots, -1$. 
Therefore, we get 
$$
\chi (C, \mathcal O_C(tH))\equiv 0 
$$ 
by $n\geq \dim C+1$. 
This is a contradiction since $H|_C$ is ample. 
This implies that $\dim W=0$ and that $H$ is ample. 
Thus we obtain that $X\simeq \mathbb P^n$ with $\mathcal O_X(D)
\simeq \mathcal O_{\mathbb P^n}(-1)$ by 
Lemma \ref{f-lem4.1}. 
Moreover, there are no qlc centers of $[X, \omega]$ 
by Lemma \ref{f-lem4.1}. 
\end{case}
\begin{case}\label{f-5-step2} 
Let us assume now that $X$ is not necessarily irreducible. 
We take an irreducible component $X'$ of $X$ such that 
$\omega'=\omega|_{X'}$ is not nef. 
By adjunction (see \cite[Theorem 6.3.5 (i)]{fujino-foundations}), 
$[X', \omega']$ is an irreducible 
quasi-log canonical pair such that $\omega'\equiv rD|_{X'}$ with 
$r>n \geq \dim X'$. 
By Case \ref{f-5-step1}, 
we see that $H|_{X'}$ is ample. 
We note that 
$[X', \omega']$ has qlc centers if we assume $X\ne X'$ since 
$X$ is connected (see \cite[Theorem 6.3.11 and 
Theorem 6.3.5 (i)]{fujino-foundations}). 
Therefore, by Lemma \ref{f-lem4.1}, 
we obtain that $X'\simeq \mathbb P^n$, $X'=X$, and 
$\mathcal O_X(D)\simeq 
\mathcal O_{\mathbb P^n}(-1)$. 
In particular, $X$ is always irreducible. 
\end{case}
We finish the proof of Theorem \ref{f-thm1.2}. 
\end{proof}

We prove Corollary \ref{f-cor1.3} as an application 
of Theorem \ref{f-thm1.2}. 

\begin{proof}[Proof of Corollary \ref{f-cor1.3}]
By \cite[Theorem 1.2]{fujino-slc}, $[X, K_X+\Delta]$ naturally 
becomes a quasi-log 
canonical pair. 
Therefore, we obtain the desired statement by Theorem \ref{f-thm1.2}. 
We note that $(X, \Delta)$ is kawamata log terminal 
since there are no qlc centers of $[X, K_X+\Delta]$ 
(see \cite[Theorem 1.2 (5)]{fujino-slc}). 
\end{proof}

We close this paper with a remark on Corollary \ref{f-cor1.3}. 

\begin{rem}\label{rem5.1} 
We can prove Corollary \ref{f-cor1.3} without 
using the theory of quasi-log schemes. 
By taking the normalization and a dlt blow-up 
(see \cite[Theorem 10.4]{fujino-fund} and 
\cite[Theorem 4.4.21]{fujino-foundations}), 
we can reduce the problem to the case where 
$(X, \Delta)$ is a $\mathbb Q$-factorial 
dlt pair. By taking a $(K_X+\Delta)$-negative 
extremal contraction $\varphi:X\to W$ and 
decreasing the coefficients of $\Delta$ slightly, 
we can check that $\dim W=0$ by using the 
argument in the proof of \cite[Theorem 3.1]{andreatta-w} 
(see \cite[Remark 3.1.2]{andreatta-w}). 
Note that some results in \cite{andreatta-w} 
are generalized in \cite{fujino-relative}. 
Then we obtain that $-(K_X+\Delta)$ is ample. 
This implies that $X\simeq \mathbb P^n$ holds 
(see Case \ref{f-case2} in Sketch of Proof of 
Theorem \ref{f-thm2.1} or 
Step \ref{f-4.1-step1} in the proof of Theorem \ref{f-lem4.1}). 
\end{rem}

%%%%%%%%%%%%%%%

\end{document}